\theoremstyle{definition}
\def\fnum{equation} 
\newtheorem{Thm}[\fnum]{Theorem}
\newtheorem{Def}[\fnum]{Definition}
\newtheorem{Rem}[\fnum]{Remark}
\newtheorem{Pro}[\fnum]{Proposition}
\numberwithin{equation}{section}
\newcommand{\Lip}{{\text {Lip}}}
  \newcommand{\R}{\ensuremath{\mathbb{R}}}
 \newcommand{\ba}{\begin{align*}}
 \newcommand{\ea}{\end{align*}}
 \newcommand{\na}{\nabla}
\newcommand{\la}{\langle}
\newcommand{\ra}{\rangle}
\newcommand{\p}{\partial}
\newcommand{\ep}{\epsilon}
\newcommand{\Hess}{{\text {Hess}}}
\newcommand{\dv}{{\text {div}}}
\newcommand{\eqr}[1]{(\ref{#1})}
\DeclareMathOperator{\RCD}{RCD}
\newcommand{\di}{\mathop{}\!\mathrm{d}}
\newcommand{\meas}{\mathfrak{m}}
\newcommand\inner[2]{\langle #1, #2 \rangle}
\title{unique continuation problem on RCD Spaces. I}
\author{Qin Deng}%
\author{Xinrui Zhao}%
\address{MIT, Dept. of Math.\\
77 Massachusetts Avenue, Cambridge, MA 02139-4307.}
\thanks{}
\email{}
\begin{document}

\maketitle

 \begin{abstract}
 In this note we establish the weak unique continuation theorem for caloric functions on compact $RCD(K,2)$ spaces and show that there exists an $RCD(K,4)$ space on which there exist non-trivial eigenfunctions of the Laplacian and non-stationary solutions of the heat equation which vanish up to infinite order at one point. We also establish frequency estimates for eigenfunctions and caloric functions on the metric horn. In particular, this gives a strong unique continuation type result on the metric horn for harmonic functions with a high rate of decay at the horn tip, where it is known that the standard strong unique continuation property fails.   
  \end{abstract}

\section{introduction}
 

In this note we study the weak and strong unique continuation properties for the heat equation on $\RCD(K,N)$ spaces. Consider the equation
 \begin{align}
     \Delta u-\partial_tu=0.
     \label{heat}
 \end{align}
Let $T > 0$. On an $\RCD(K,N)$ space $(X, \di, \meas)$, a solution to \eqref{heat} on $[0,T]$ is defined as follows. \begin{Def}
 A function $u: X\to \R$ is a solution (resp. sub-solution, super-solution) of \eqref{heat} if $u\in W^{1,2}_{loc} (X\times [0,T])$ and that, for all $\phi\in \Lip_0(X\times[0,T])$ with $\phi\geq 0$, we have \begin{align}
     \int_{X\times [0,T]}  
     -\inner{\na u}{\na \phi} -\phi\,\partial_tu\,\, d\meas dt=0\, (\text{resp. }\leq 0, \geq 0).
     \label{requ}
 \end{align}
 
\end{Def}

This paper is concerned with the validity of the strong and weak unique continuation property for the heat equation \eqref{heat}. To be precise, we say that 
\begin{itemize}
    \item The heat equation satisfies the weak unique continuation property on an $\RCD(K,N)$ space $(X, \di, \meas)$ if, for any solution $u \in W^{1,2}(X \times [0, T])$ of \eqref{heat}, if $u$ vanishes on any non-empty open subset of $X \times [0,T]$, then $u \equiv 0$.
    \item A function $u \in L^2(X \times [0,T])$ vanishes up to infinite order at some $(x_0, t_0) \in X \times [0,T]$ if there exists some $R > 0$ so that for any integer $N > 0$, there exists $C(N) > 0$ so that
    \begin{equation*}
        \int_{B_{r}(x_0) \times ((t_0 - r^2, t_0 + r^2) \cap [0, T])} |u|^2\, \, d\meas dt \leq Cr^N.
    \end{equation*}
    \item The heat equation satisfies the strong unique continuation property on an $\RCD(K,N)$ space $(X, \di, \meas)$ if, for any solution $u \in W^{1,2}(X \times [0,T])$ of \eqref{heat}, if $u$ vanishes up to infinite order at any $(x_0, t_0) \in X \times (0,T]$, then $u \equiv 0$.
\end{itemize}

Note that we restrict $t_0$ to be away from $0$ in the definition of strong unique continuation as it is always possible to solve the heat equation starting from some initial data $u(\cdot, 0)$, which vanishes up to infinite order (spatially) at some $x_0 \in X$. This would easily imply that the heat flow also vanishes up to infinite order at $(x_0,0)$ in the smooth case. We mention that unique continuation type results are related to giving an upper bound for the measure of the nodal set of non-trivial solutions and that, recently, there has also been work to establish lower bounds in the nonsmooth setting, see \cite{CF21, DPF21}.

Our first result in Section 1 gives the validity of the weak unique continuation property for compact $\RCD(K,2)$ spaces:
\begin{Thm}\label{WUC RCD(K,2)}
Let $(X, \di, \meas)$ be a compact $\RCD(K,2)$ space. The heat equation on $X$ satisfies the weak unique continuation property. 
\end{Thm}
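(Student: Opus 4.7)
My strategy is to reduce the parabolic unique continuation problem to the corresponding elliptic problem for Laplace eigenfunctions via spectral decomposition, and then to exploit the rigidity of the two-dimensional $\RCD$ geometry. Since $X$ is a compact $\RCD(K,2)$ space, the negative Laplacian $-\Delta$ has discrete spectrum $0 = \lambda_0 < \lambda_1 \leq \lambda_2 \leq \cdots \to \infty$ with an $L^2$-orthonormal basis of eigenfunctions $\{\phi_k\}$. Any $u \in W^{1,2}(X \times [0,T])$ solving \eqref{heat} admits the expansion
\[
u(x,t) \;=\; \sum_\lambda e^{-\lambda t}\, v_\lambda(x), \qquad v_\lambda \;:=\; \sum_{\lambda_k = \lambda} c_k \phi_k, \quad c_k = \langle u(\cdot,0),\phi_k\rangle_{L^2},
\]
converging in $L^2(X)$ for each $t \in [0,T]$ and smoothly in $t$ on $(0,T]$ thanks to heat-semigroup regularization; each $v_\lambda$ is then an eigenfunction of $\Delta$ with eigenvalue $-\lambda$.

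Now suppose $u$ vanishes on an open set $U \times (a,b) \subset X \times [0,T]$; after shrinking one may assume $a > 0$. For $t > 0$ the function $u(\cdot,t)$ is continuous on $X$, so pointwise evaluation makes sense, and for fixed $x \in U$ the map $t \mapsto u(x,t)$ is real analytic on $(0,T]$ because the spectral series extends analytically in $t$ and converges uniformly on compact subsets of $(0,\infty)$. Vanishing on $(a,b)$ therefore forces vanishing on all of $(0,\infty)$, and then linear independence of the family $\{e^{-\lambda t}\}_\lambda$ (obtained by dividing by the dominant exponential and letting $t \to \infty$) forces $v_\lambda(x) = 0$ for every $x \in U$ and every spectral $\lambda$. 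The parabolic statement is thus reduced to the following elliptic one: any $v \in W^{1,2}(X)$ with $\Delta v = -\lambda v$ which vanishes on a non-empty open subset of a compact $\RCD(K,2)$ space $X$ must vanish identically.

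For this remaining elliptic step I would invoke the essentially two-dimensional structure of $X$: the regular set $\Reg(X)$ is open of full measure, and near each regular point one has bi-Lipschitz (or, after a further argument, isothermal or harmonic) charts to planar Euclidean disks in which $\Delta v = -\lambda v$ becomes a uniformly elliptic divergence-form equation with bounded measurable coefficients. Classical planar unique continuation results in the spirit of Alessandrini and Bers--Nirenberg then propagate the zero set of $v$ across any such chart, and chaining charts -- using that the singular set of a compact $\RCD(K,2)$ space does not disconnect $X$ -- spreads the vanishing from $U$ to all of $\Reg(X)$, and hence to all of $X$ by $W^{1,2}$-continuity. The main obstacle lies precisely in this elliptic step, both in producing local planar charts with enough regularity for a sharp 2D unique continuation theorem to apply and in verifying that the singular set of a compact $\RCD(K,2)$ space is sparse enough not to obstruct the chaining procedure. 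This is exactly what fails in higher dimensions, consistent with the $\RCD(K,4)$ counterexample announced in the abstract.
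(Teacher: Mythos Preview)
Your proposal is correct and follows essentially the same architecture as the paper's proof: establish time analyticity of $t\mapsto u(x,t)$, use the eigenfunction expansion and the asymptotic $e^{\lambda_j t}u(x,t)\to a_j\phi_j(x)$ as $t\to\infty$ to conclude that each eigencomponent vanishes on the given open set, and then appeal to elliptic unique continuation for eigenfunctions on compact $\RCD(K,2)$ spaces---the step you flag as the ``main obstacle'' is exactly what the paper imports from \cite{DZ}. The one substantive difference is in how time analyticity is obtained: the paper proves it via Lin--Zhang type iterated parabolic Caccioppoli estimates together with a mean-value inequality from Moser iteration (an argument that works on any $\RCD(K,N)$ space and is recorded as a result of independent interest), whereas your route through uniform convergence of the spectral series is shorter but relies on compactness of $X$ from the outset.
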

As in \cite{DZ}, where the same theorem was shown for harmonic functions, the main idea to handle the spatial direction is to leverage the $C^{0}$-Riemannian structure of non-collapsed $\RCD(K,2)$ spaces from \cite{LS, OS}. To handle the time direction, we also show the time analyticity of solutions of \eqref{heat} following the recent \cite{Z}.

In \cite{DZ}, an $\RCD(K,4)$ space was constructed on which there exists non-trivial harmonic functions which vanish up to infinite order at some point. The heat flow of any such harmonic function would immediately give a counterexample to strong unique continuation for the heat equation as well. As such, we will be primarily interested in the strong unique continuation property for non-stationary solutions of the heat equation in this paper. We extend our result from \cite{DZ} as follows in Sections 3 and 5 respectively:
\begin{Thm}
    There exists an $\RCD(K,4)$ space and a non-trivial eigenfunction on it with eigenvalue $\mu\neq 0$ which vanishes up to infinite order at one point.
\end{Thm}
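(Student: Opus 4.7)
The plan is to adapt the warped-product construction from \cite{DZ} that produced a nontrivial harmonic function vanishing to infinite order at a tip point $x_0$ of an $\RCD(K,4)$ space $X$. The guiding observation is that the infinite-order vanishing is driven by a singular coefficient in the radial ODE whose size dominates any bounded eigenvalue $\mu$, so the same mechanism should yield nonzero-eigenvalue eigenfunctions with the same vanishing behaviour.

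First I would retain the compact $\RCD(K,4)$ space $X$ from \cite{DZ}, which near $x_0$ has the warped product structure $[0, R] \times_f Y$ for a compact cross-section $Y$ of effective dimension three and a warping $f:[0,R]\to [0,\infty)$ with $f(0)=0$. Let $\psi$ be a Laplace eigenfunction on $Y$ with eigenvalue $\lambda > 0$; such a $\psi$ exists since $Y$ is compact with positive spectrum away from constants. The ansatz $u(r,y) = \phi(r)\psi(y)$ reduces the equation $-\Delta_X u = \mu u$ on the warped region to the radial ODE
\begin{equation*}
\phi''(r) + 3\,\frac{f'(r)}{f(r)}\,\phi'(r) + \Bigl(\mu - \frac{\lambda}{f(r)^2}\Bigr)\phi(r) = 0.
\end{equation*}
Since $\lambda/f(r)^2 \to \infty$ as $r \to 0$ while $\mu$ stays bounded, the leading-order behaviour at $r = 0$ is independent of $\mu$, and the Frobenius/WKB analysis of \cite{DZ} then applies essentially verbatim to furnish, for each $\mu$, a local solution $\phi_\mu$ such that $\phi_\mu(r)\psi(y)$ vanishes to infinite order at $x_0$.

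Next I would promote the local solution to a global eigenfunction via a shooting argument: parametrize by $\mu$ the solution $\phi_\mu$ normalized by the infinite-order-vanishing condition at $r = 0$, and impose the natural smoothness/matching condition at the other end $r = R$ where $X$ is capped off. The resulting problem is a self-adjoint Sturm-Liouville eigenvalue problem whose spectrum is discrete and unbounded above, so it contains infinitely many $\mu \neq 0$; any such $\mu$ yields the desired eigenfunction on $X$.

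The hard part will be verifying that the resulting function is a genuine weak eigenfunction of the $\RCD$-Laplacian on all of $X$, not merely a classical solution on the smooth region. This requires justifying the weak formulation across the tip, which I would handle by combining a careful use of the infinite-order vanishing to discard potential boundary contributions near $x_0$ with the identification of the classical and $\RCD$ Laplacians on the smooth part of $X$ already established in \cite{DZ} via Ketterer's warped-product criteria.
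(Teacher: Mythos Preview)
Your approach is essentially the paper's: separate variables on the modified metric horn of \cite{DZ}, note that the cross-sectional eigenvalue contributes a singular potential $\lambda/f(r)^2$ to the radial ODE which dominates the bounded $\mu$ near $r=0$, and conclude that the $L^2$ radial solution must vanish to infinite order at the tip. The paper makes the ODE step explicit via the substitution $r\mapsto r^{-\epsilon}$ (yielding the sharp rate $f_i(r)=O(e^{-Cr^{-\epsilon}})$) rather than invoking Frobenius/WKB abstractly, while you are more careful than the paper about the global existence via Sturm--Liouville and about the weak formulation across the tip; one small correction is that the first-order coefficient in your radial ODE should reflect the weighted measure of the horn (it is $c/r$ with $c=(n-1)(1+\epsilon)+(N-n)(1-\eta)$, not $3f'/f$), though this does not affect the mechanism.
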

\begin{Thm}
    There exists an $\RCD(K,4)$ space and a non-stationary solution of the heat equation on it which vanishes up to infinite order at one point.
\end{Thm}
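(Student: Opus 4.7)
The natural plan is to promote the eigenfunction constructed in the preceding theorem to a non-stationary solution of the heat equation via separation of variables. Concretely, let $(X,\di,\meas)$ be the $\RCD(K,4)$ space from the preceding theorem and let $\phi \in W^{1,2}(X)$ be a non-trivial eigenfunction with $\Delta\phi = \mu\phi$ (in the weak sense, with sign convention consistent with \eqref{heat}), $\mu \neq 0$, which vanishes up to infinite order at some point $x_0 \in X$ in the spatial sense that $\int_{B_r(x_0)} |\phi|^2\, d\meas \leq C(N)\, r^N$ for every $N \in \N$. I would then define
$$u(x,t) := e^{\mu t}\phi(x), \qquad (x,t) \in X \times [0,T].$$

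The first step is to verify that $u$ is a weak solution of \eqref{heat}. Given a test function $\psi \in \Lip_0(X \times [0,T])$, Fubini reduces the integral in \eqref{requ} to a sum of expressions of the type $\int_0^T e^{\mu t}\big(\int_X \inner{\nabla\phi}{\nabla\psi(\cdot,t)}\, d\meas\big)\,dt$ and $\int_0^T \mu e^{\mu t}\big(\int_X \phi\,\psi(\cdot,t)\, d\meas\big)\,dt$; since $\psi(\cdot,t) \in \Lip_0(X)$ for each $t$, the weak eigenfunction equation for $\phi$ applied time-slice by time-slice makes the two terms cancel. In particular, $u \in W^{1,2}(X\times[0,T])$ and equality (not just inequality) holds in \eqref{requ}.

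The second step is to check the infinite-order vanishing at $(x_0, t_0)$ for any fixed $t_0 \in (0, T]$. Setting $C_T := \sup_{[0,T]} e^{2\mu t}$, a direct estimate yields
$$\int_{B_r(x_0) \times ((t_0-r^2,t_0+r^2) \cap [0,T])} |u|^2\, d\meas\,dt \; \leq \; 2 C_T\, r^2 \cdot C(N)\, r^N \; \leq \; C'(N)\, r^{N+2}$$
for every $N \in \N$, so $u$ vanishes up to infinite order at $(x_0,t_0)$ in the spacetime sense of the definitions. Finally, $u$ is non-stationary since $\partial_t u = \mu u \not\equiv 0$ whenever $\mu \neq 0$.

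The main obstacle in this theorem is thus really the construction of the infinite-order vanishing eigenfunction carried out in the preceding theorem; once that is available, the non-stationary heat solution is obtained essentially for free by separation of variables. An alternative, more intrinsic route would be to start from some nontrivial initial datum that vanishes to infinite order at $x_0$ and run the heat semigroup; the hard step there would be to ensure that infinite-order vanishing is preserved at positive times without appealing (directly or indirectly, via a spectral expansion) to the eigenfunction construction of the preceding theorem.
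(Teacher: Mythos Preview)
Your argument is correct and substantially simpler than the paper's. Once the eigenfunction $\phi$ with $\Delta\phi=\mu\phi$, $\mu\neq 0$, vanishing to infinite order at $x_0$ is in hand, the separated solution $u(x,t)=e^{\mu t}\phi(x)$ immediately gives a non-stationary caloric function vanishing to infinite order at $(x_0,t_0)$, exactly as you wrote; the verifications of the weak equation, the $W^{1,2}$ membership, and the spacetime integral decay are routine.

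The paper takes a different route. Working on the compact modified metric horn, it considers a general caloric function with vanishing radial component, expands each angular mode $f_i(r,t)$ in the $L^2$ eigenbasis as $f_i(r,t)=\sum_j c_j e^{-\nu_j t}g_j(r)$, and then combines the pointwise eigenfunction decay $g_j(r)=O\big(e^{-c r^{-\epsilon}}\big)$ from Section~3 with Weyl-type eigenvalue bounds $C_1 j^{2/N}\le \nu_j\le C_2 j^2$ to estimate the full sum. Splitting at an index $k$ comparable to $r^{-\epsilon}$ (low frequencies controlled by the eigenfunction decay, high frequencies by $e^{-\nu_j t}$), one obtains $|f_i(r,t)|\le e^{-c r^{-\epsilon}}$ uniformly in $t>0$. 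This is considerably more work, but it buys the stronger conclusion recorded in the Remark following the proof: \emph{every} caloric function on the modified horn with no radial part vanishes to infinite order at the tip, not just the particular separated solution you exhibit. Your approach is the natural one if only the existence statement is wanted; the paper's argument is aimed at the broader structural fact.
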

Besides these results, we also give frequency estimate on metric horns in Sections 2 and 4 and establish a strong unique continuation type result for eigenfunctions and caloric functions on metric horns, where the classical strong unique continuation property fails by the results of \cite{DZ}.

 We have to use the geometry setting of this equation to deal with the difficulty of coefficients which are non-Lipschitz. For discussions on smooth manifolds with Ricci curvarure lower bound and some related questions, see for example \cite{CM1,CC1,CM2,CC2,CC3,CM3,CN,CM4,CM5}. For references on the theory of $\RCD$ spaces, see \cite{S06a, S06b, LV09, AGS14, G15, AGMR15, AGS15, EKS15, DPG, MN19, BS20, CaM1, CaM2, CaMi, KM21, BNS22}. 
 
 \section*{Acknowledgement}
 We are very grateful to Prof.\,\,Tobias Colding for his interest on this unique continuation problem and his constant encouragements. Xinrui Zhao is supported by NSF Grant DMS 1812142 and NSF Grant DMS 1811267.
 
 \section{weak unique continuation of caloric functions on $\RCD(K,2)$ spaces}
 
 In this section we will consider the case of $\RCD(K,2)$ spaces. In the smooth setting, the Ricci curvature in $2$ dimensions completely determines sectional curvature and hence a Ricci lower bound translates to a sectional curvature lower bound. This is also true for any non-collapsed $\RCD(K,2)$ space $(X,\di, \meas)$.
 \begin{Thm}(\cite{LS})
If $\meas = c\mathcal{H}^2$ with $c \geq 0$ (i.e. $(X, \di, \meas)$ is non-collapsed), then $(X,\di)$ is an Alexandrov space with curvature bounded below by $K$.
 \label{thm:raequ}
 \end{Thm}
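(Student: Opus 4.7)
The plan is to verify the Alexandrov (Toponogov) triangle comparison inequality with constant $K$ on $(X, \di)$ by combining the infinitesimal structure theory of non-collapsed $\RCD$ spaces with the synthetic curvature-dimension condition $\CD(K,2)$ restricted to one-dimensional needles. The overall approach is: first classify tangent cones, then upgrade an infinitesimal Alexandrov$(0)$ condition at every point to a local Alexandrov$(K)$ condition at small but finite scale, and finally globalize via the Toponogov globalization theorem.

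The first step is to use the non-collapsed structure theory. By the results of Br\'ue--Semola and De Philippis--Gigli, the hypothesis $\meas = c\HH^2$ forces every tangent cone at every point of $X$ to be a non-collapsed two-dimensional $\RCD(0,2)$ metric cone. Applying Ketterer's characterization of $\RCD$ cones, the cross-section must be a one-dimensional $\RCD(0,1)$ space and hence (up to boundary) a circle or arc. Combined with Bishop--Gromov at the tip, the cone angle is at most $2\pi$. Thus every tangent cone is isometric to a flat $2$-cone $C(S^1_\ell)$ with $\ell \leq 2\pi$ (or a Euclidean half-plane at boundary-type points), which is itself an Alexandrov space of curvature $\geq 0$. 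In particular, angle comparison with constant $0$ holds exactly in the infinitesimal limit.

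The second step is to propagate this infinitesimal comparison to small finite scales on $X$ itself. Given four points clustered near a base point $x_0$, one rescales so that their mutual distances are of order $1$ and uses the pointed mGH-compactness of $\RCD(K,N)$ spaces to pass to a subsequential tangent cone, where the Alexandrov$(0)$ comparison is sharp. The deficit in the rescaled Toponogov inequality on $X$ must then tend to $0$; the correct dependence on $K$ is produced by running the one-dimensional localization of $\CD(K,2)$ along the optimal transport plan connecting the relevant pair of comparison sides, which gives a Jacobi-type inequality for $\meas$ along each needle. This yields the Alexandrov$(K)$ four-point condition in a uniform neighborhood of $x_0$, after which the classical Toponogov globalization theorem (valid on any complete length space) propagates the comparison to all of $(X,\di)$.

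The hard part is the second step, specifically at singular points whose tangent cone has angle strictly less than $2\pi$. Near such points, geodesics can split or terminate in subtle ways, and the sole knowledge that every tangent cone is Alexandrov$(0)$ is only infinitesimal information. Controlling the finite-scale deficit and showing that it depends on $K$ alone (not on the cone-angle defect) requires a quantitative form of tangent cone convergence, together with the rigidity side of Bishop--Gromov to rule out pathological behavior of geodesics near singularities. Both ingredients crucially use the non-collapsed hypothesis $\meas = c\HH^2$: without it, tangent cones could drop dimension and the small-scale geometry could be far wilder than that of a flat $2$-cone.
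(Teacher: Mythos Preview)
The paper does not give its own proof of this statement: it is quoted verbatim as a result of Lytchak--Stadler \cite{LS} and used as a black box. There is therefore no proof in the paper to compare your proposal against.

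That said, your proposal has a genuine gap, and it lies exactly where you yourself flag the difficulty. Step~1 is fine: the tangent-cone classification for non-collapsed $\RCD(K,2)$ spaces does give that every tangent cone is a flat sector of angle at most $2\pi$. But Step~2 does not go through as written. The compactness argument you sketch (rescale a four-point configuration, pass to a tangent cone, observe that the Toponogov defect vanishes in the limit) yields only that the Alexandrov$(0)$ comparison holds \emph{approximately} at sufficiently small scales, with an error $o(1)$ in the scale and with no a~priori uniformity in the base point. It does not produce the Alexandrov$(K)$ inequality exactly, which is what is needed before you can invoke Toponogov globalization. Your attempt to recover the correct $K$-dependence by ``running the one-dimensional localization of $\CD(K,2)$ along the optimal transport plan'' is where the argument breaks: the needle decomposition controls the \emph{measure} profile along transport rays, whereas the Toponogov four-point condition is a purely \emph{metric} statement about distances. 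Converting the former into the latter is precisely the content of the theorem, and nothing in your sketch explains how a Jacobi-type inequality for $\meas$ along needles forces the required angle/distance comparison between arbitrary geodesic triangles.

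In short, knowing that every tangent cone is Alexandrov$(\geq 0)$ is infinitesimal information, and soft compactness arguments do not upgrade it to a finite-scale curvature bound with the correct constant. The actual proof in \cite{LS} uses two-dimensional tools (in particular, a careful analysis exploiting that in dimension~$2$ Ricci and sectional curvature coincide, together with structure results specific to surfaces) rather than the tangent-cone-plus-globalization scheme you outline.
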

 
 Therefore, it suffices to consider $2$-dimensional Alexandrov spaces of curvature at least $K$ and collapsed $\RCD(K,2)$ spaces. Alexandrov spaces are known to have a generalized Riemannian structure by \cite{OS}. We refer to \cite{OS} for relevant definitions. 
 \begin{Thm}(\cite{OS})
 Let $(X,\di)$ be an $n$-dimensional Alexandrov space and denote $S_X$ as the set of singular points. Then there exists a $C^0$-Riemannian structure on $X\setminus S_X\subset X$ satisfying the following:
\begin{itemize}
    \item[(1)]There exists an $X_0\subset  X\setminus S_X$ such that $X\setminus  X_0$ is of $n$-dimensional Hausdorff measure zero and that the Riemannian structure is $C^{\frac{1}{2}}$-continuous on $X_0\subset X$;
    \item[(2)]The metric structure on $X\setminus S_X$ induced from the Riemannian structure coincides with the original metric of $X$.
\end{itemize}
 \label{Thm:c1}
 \end{Thm}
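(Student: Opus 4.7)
The plan is to construct the Riemannian structure locally via almost-isometric coordinates built from distance functions, exploiting the fact that at every point of $X \setminus S_X$ the tangent cone is Euclidean $\mathbb{R}^n$. Fix $p \in X \setminus S_X$. Since the tangent cone $T_pX$ is isometric to $\R^n$, Gromov–Hausdorff approximation at small scales yields $n$ points $q_1,\dots,q_n$ close to $p$ whose directions from $p$ are nearly orthonormal, in the sense that $\angle q_i p q_j \approx \pi/2$ for $i\neq j$. Set $F_i(x) = \dist(x,q_i)$ and $F = (F_1,\dots,F_n)$; comparison geometry (Toponogov) on a small ball $U\ni p$ shows that $F\colon U \to \R^n$ is a bi-Lipschitz map that is close to an isometry in a quantitative sense. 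I would then define the Riemannian metric on $U\setminus S_X$ by $g_{ij}(x) = \inner{\nabla F_i}{\nabla F_j}_x$, where the inner product of gradients of semiconcave functions on an Alexandrov space is defined via the first variation formula. The global structure is obtained by patching such charts together.

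The next step is to verify continuity and the $C^{1/2}$ regularity on a subset $X_0$ of full $\mathcal{H}^n$-measure. Each $F_i$ is semiconcave, hence belongs to the DC (difference-of-concave) class of Perelman, and so its gradient is defined everywhere and is of bounded variation. The point is to identify $X_0$ as the set of points where the tangent cone convergence is metrically controlled with a definite rate — concretely, where the Toponogov excess is quadratic — and to show that $X \setminus X_0$ has Hausdorff dimension $<n$. On $X_0$, the angle comparison theorem combined with the quadratic excess controls $|g_{ij}(x) - g_{ij}(y)| \lesssim \dist(x,y)^{1/2}$, the exponent $1/2$ being forced by the order of the comparison (differences of distance-squared vary as the square of the distance, so derivatives pick up a square-root). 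Continuity on all of $X\setminus S_X$ follows from a density argument once $C^{1/2}$ regularity is known on $X_0$.

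The final step is metric compatibility, i.e. showing that the length distance induced by $g$ on $X\setminus S_X$ equals the restriction of the Alexandrov distance. One direction is immediate: any Lipschitz curve in the smooth atlas is a Lipschitz curve in $X$ with the same length, since the charts $F$ are almost-isometries at infinitesimal scale. For the other direction, given $x,y\in X\setminus S_X$, approximate a minimizing geodesic in $X$ by broken geodesics lying entirely in the regular set and use that the pullback metric agrees with Euclidean at first order inside each chart; taking chart diameters to zero yields equality of lengths.

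The main obstacle is the $C^{1/2}$ clause in (1): extracting a \emph{quantitative} rate from the purely qualitative tangent-cone convergence given only a lower curvature bound. This is where one must combine the sharp version of the Toponogov comparison with the DC-calculus of Perelman to dominate second-order errors, and where one must also prove that the exceptional set outside $X_0$ is $\mathcal{H}^n$-null — a genuine measure-theoretic statement that does not follow formally from the pointwise definition of regular points.
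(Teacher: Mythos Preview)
This theorem is quoted in the paper as a result of Otsu--Shioya \cite{OS} and is not proved there; the paper simply invokes it as background for the $C^0$-Riemannian structure on non-collapsed $\RCD(K,2)$ spaces. There is therefore no ``paper's own proof'' to compare your proposal against.

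For what it is worth, your sketch is broadly in the spirit of the original Otsu--Shioya argument: one does build charts from distance functions to strainer points, defines $g_{ij}$ via first variation, and obtains the H\"older exponent from quantitative angle comparison on a full-measure set of sufficiently regular points. But since the present paper does not reproduce that argument, your write-up is not needed here --- a citation suffices.
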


In a coordinate neighborhood $U$ given by the $C^0$-Riemannian structure of $X$, with corresponding metric $g_{ij}$, a solution $u$ of \eqref{heat} satisfies
\begin{align}
    \int_{U\times[0,T]}  -g^{ij}\,u_i\, \phi_j-\phi\,\partial_tu\,\, d\meas\,dt=0
\end{align}
for all $\phi\in Lip_0(X\times[0,T])$ supported in $U\times[0,T]$ (Cf. \cite{KMS01}). Note that since $g^{ij}$ may not be Lipschitz, the results of \cite{L} do not apply. We will instead use techniques which are special in the 2-dimensional case. For more details, see for example \cite{CH}. 

We proceed by taking local isothermal coordinates. To be precise, consider functions $\sigma,\rho:U \to \R^2$ satisfying
\begin{align}
    \sigma_x=\frac{g^{xy}\rho_x+g^{yy}\rho_y}{\sqrt{g^{xx}g^{yy}-(g^{xy})^2}}\\
    -\sigma_y=\frac{g^{xx}\rho_x+g^{xy}\rho_y}{\sqrt{g^{xx}g^{yy}-(g^{xy})^2}},
\end{align}
where derivatives are taken with respect to the coordinate chart for $U$. The existence of such functions is given by \cite{Mo}, see also \cite{BN,CH}. This is equivalent to solving the complex equation 
\begin{align}
     w_{\bar{z}}=\mu w_z,
 \end{align}
 where $\mu=\frac{g^{yy}-g^{xx}-2ig^{xy}}{g^{xx}+g^{yy}+2\sqrt{g^{xx}g^{yy}-(g^{xy})^2}}$. Note that 
 \begin{align}
     |w_z|^2=\frac{(\sigma_x\rho_y-\sigma_y\rho_x)}{4\sqrt{g^{xx}g^{yy}-(g^{xy})^2}}(g^{xx}+g^{yy}+2\sqrt{g^{xx}g^{yy}-(g^{xy})^2})
 \end{align}
 
We have
 \begin{align}
     u_x=u_\rho\rho_x+u_\sigma\sigma_x,\\
     u_y=u_\rho\rho_y+u_\sigma\sigma_y,
 \end{align}
 and so
 \begin{align}
    \notag\int_{U\times[0,T]}  &-(g^{xx}\,(u_\rho\rho_x+u_\sigma\sigma_x)\, (\phi_\rho\rho_x+\phi_\sigma\sigma_x))-(g^{xy}\,(u_\rho\rho_x+u_\sigma\sigma_x)\, (\phi_\rho\rho_y+\phi_\sigma\sigma_y))\\&-(g^{xy}\,(u_\rho\rho_y+u_\sigma\sigma_y)\, (\phi_\rho\rho_x+\phi_\sigma\sigma_x))-(g^{yy}\,(u_\rho\rho_y+u_\sigma\sigma_y)\, (\phi_\rho\rho_y+\phi_\sigma\sigma_y))-\phi\,\partial_tu\,\, d\meas\,dt=0.
\end{align}
 
 After rearranging the terms,
  \begin{align}
    \int_{U\times[0,T]}  \sqrt{g^{xx}g^{yy}-(g^{xy})^2}(\sigma_x\rho_y-\sigma_y\rho_x)(u_\rho\phi_\rho+u_\sigma\phi_\sigma)-\phi\,\partial_tu\,\, d\meas\,dt=0.
\end{align}
and so in the new coordinates, we have
 \begin{align}
    \int_{\R^2\times[0,T]}  (u_\rho\phi_\rho+u_\sigma\phi_\sigma)-\frac{\phi\,\partial_tu}{\sqrt{g^{xx}g^{yy}-(g^{xy})^2}(\sigma_x\rho_y-\sigma_y\rho_x)}\,\, dx\,dy\,dt=0.
\end{align}
 Thus in the coordinate $(\rho,\sigma)$ the equation becomes
 \begin{align}
     \Delta u-a\partial_t u=0,
 \end{align}
 where $a$ is measurable and H\"older on a full measure subset. Note that as now we have a possibly discontinuous coefficient in front of $\partial_t$, the result in \cite{L} cannot be used. We will instead use a geometrical argument to deal with this difficulty.
 
Now we are ready to state the main result of this section.
 \begin{Thm}
Let $u\in W^{1,2} (X\times [0,T])$ be a solution of \eqref{heat} on an $\RCD(K,2)$ space. If $u$ vanishes on a non-empty open set $\Gamma \subset X\times (0,T]$, then $u\equiv 0$ on $\text{proj}_X(\Gamma)\times [0,T]$. Moreover if $X$ is compact, then $u\equiv 0$ on $X\times [0,T]$.
 \end{Thm}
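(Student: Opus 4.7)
The plan is to combine time analyticity of caloric functions, following \cite{Z}, with spatial weak unique continuation for eigenfunctions. Time analyticity will extend vanishing from $\Gamma$ to $U\times[0,T]$, where $U:=\text{proj}_X(\Gamma)$; in the compact case, a spectral decomposition together with unique continuation for eigenfunctions then upgrades this to $X\times[0,T]$.

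For the first step I would prove that any $W^{1,2}$-solution of \eqref{heat} is real analytic as an $L^2(X)$-valued function of $t$ on every compact subinterval of $(0,T)$, via the iterated caloric bound $\|\partial_t^k u(\cdot,t)\|_{L^2}\leq C^k k!$ with $C$ depending only on the $\RCD(K,2)$ parameters and $\mathrm{dist}(t,\{0,T\})$; this follows from Gaussian heat-kernel and Bochner-type estimates on $\RCD$ spaces exactly as in \cite{Z}. Shrinking $\Gamma$ we may assume it contains a cylinder $V\times(t_1,t_2)$ with $(t_1,t_2)$ compactly contained in $(0,T)$. The analytic map $t\mapsto u(\cdot,t)|_V\in L^2(V)$ vanishes on $(t_1,t_2)$, hence on all of $(0,T)$ by the identity theorem for vector-valued analytic functions, and then on $[0,T]$ by $W^{1,2}$-continuity at the endpoints. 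Covering $U=\text{proj}_X(\Gamma)$ by such cylinders yields $u\equiv 0$ on $U\times[0,T]$.

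For compact $X$, the Laplacian has discrete spectrum $\{\lambda_n\}$ with an orthonormal eigenbasis $\{\phi_n\}$, and $u$ admits the expansion $u(x,t)=\sum_n c_n e^{-\lambda_n t}\phi_n(x)$, convergent in $W^{1,2}$ for $t>0$ and analytic in $t$ there. For any $\psi\in L^2(U)$ the scalar function
\[
F_\psi(t):=\sum_n c_n\!\left(\int_U \phi_n\psi\,d\meas\right)\!e^{-\lambda_n t}
\]
is analytic in $t>0$ and vanishes on $(0,T]$, so analytic continuation gives $F_\psi\equiv 0$ on $(0,\infty)$. Inducting on the smallest eigenvalue $\lambda$ for which $\Phi_\lambda:=\sum_{\lambda_n=\lambda}c_n\phi_n$ has $\int_U\Phi_\lambda\psi\,d\meas\neq 0$ (letting $t\to\infty$ in $e^{\lambda t}F_\psi(t)$ extracts the leading exponential) forces $\int_U\Phi_\lambda\psi\,d\meas=0$ for every $\psi$ and every $\lambda$, i.e.\ $\Phi_\lambda\equiv 0$ on $U$. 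Each $\Phi_\lambda$ is a Laplacian eigenfunction, and in isothermal coordinates the eigenvalue equation $\Delta\phi=-\lambda\phi$ becomes $\Delta_{\text{flat}}\phi=-\lambda\, a(x)\,\phi$ with bounded measurable $a$; classical 2D weak unique continuation for such operators, adapted exactly as in \cite{DZ} for the harmonic case, then forces $\Phi_\lambda\equiv 0$ on all of $X$. Hence $u\equiv 0$ on $X\times[0,T]$.

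The hard part is the quantitative time analyticity statement: while structurally parallel to \cite{Z}, it requires verifying the iterated $L^2$ bound with constants depending only on the $\RCD(K,2)$ structure and not on smoothness. Once that is in place, the remaining steps are essentially algebraic, combined with the isothermal coordinate reduction and 2D weak unique continuation already developed in this section.
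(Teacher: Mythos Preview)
Your proposal is correct and follows essentially the same two-step strategy as the paper: first establish time analyticity of caloric functions (the paper does this pointwise via localized iterated $L^2$ bounds and Moser iteration, while you use the $L^2(X)$-valued version, which is equally sufficient), then in the compact case use the eigenfunction expansion, extract the leading exponential as $t\to\infty$, and invoke the weak unique continuation for eigenfunctions from \cite{DZ}. The only cosmetic differences are that the paper reduces to the non-collapsed case at the outset and assumes simple eigenvalues for notational convenience, whereas you handle multiplicity via the projections $\Phi_\lambda$.
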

\begin{proof}

By a similar discussion as in \cite{DZ}, we can only consider the non-collapsed case. We assume that u vanishes on a non-empty open set $V\times (t_1,t_2)$. 
 

Since $u$ satisfies the heat equation, we have the following a priori estimates (cf.\cite[Remark 5.2.11]{G20}):
\begin{align}
    \|u(\cdot,t)\|_{L^2}\leq \|u(\cdot,0)\|_{L^2},\\
    \||\na u(\cdot,t)|\|_{L^2}\leq \frac{\|u(\cdot,0)\|_{L^2}}{\sqrt{t}},\\
   \|\partial_t u(\cdot,t)\|_{L^2} =\|\Delta u(\cdot,t)\|_{L^2}\leq \frac{\|u(\cdot,0)\|_{L^2}}{t}\label{time},
\end{align}
for any $t \in (0,T]$. 

We first prove that $u$ vanishes on $V\times [0,T]$ by showing that $u(x,\cdot)$ is analytic with respect to $t$. For the corresponding arguments on Riemannian manifolds see for example \cite{Z}. For the reader's convenience we sketch the argument here.
 
 As in \cite{LZ}, for any $(x_0,t_0)$ with $0 < t_0 < T$, $k>0$ and $j\leq k$, we consider \begin{align}
     H_j^1&=B(x_0,\frac{j\sqrt{t_0}}{\sqrt{2k}})\times [t_0-\frac{jt_0}{2k},t_0+\frac{jt_0}{2k}],\\ H_j^2&=B(x_0,\frac{(j+0.5)\sqrt{t_0}}{\sqrt{2k}})\times [t_0-\frac{(j+0.5)t_0}{2k},t_0+\frac{(j+0.5)t_0}{2k}].
 \end{align}
 So that $u$ is defined on these sets, we extend $u$ to be a solution of \eqref{heat} on $X \times [0, \infty)$.

By \cite{MN19}, see also \cite{CN}, we may choose a cutoff function $\phi_1(x,t)$ supported in $H^2_j$ such that $\phi_1=1$ on $H^1_j$, satisfying
\begin{align}
\label{cutoffestimates}
    |\na \phi_1(x,t)|^2+|\partial_t \phi_1(x,t)|\leq \frac{Ck}{t_0}.
\end{align}
Indeed, $\phi_1$ may be chosen to be a product of two cutoff functions on the spatial and time coordinates respectively, so that the spatial and time derivatives are well-defined without further measure theoretic arguments. 
It follows that
\begin{align}
    \int_{H^2_j}|\partial_tu|^2\phi_1^2\,d\meas\,dt&=-\int_{H_j^2}(\la\na\partial_t u,\na u\ra\phi_1^2+\partial_t u\la \na u,\na \phi_1^2\ra)\,d\meas\,dt\notag\\&=-\int_{H_j^2}(\frac{1}{2}\partial_t |\na u|^2\phi_1^2+\partial_t u\la \na u,\na \phi_1^2\ra)\,d\meas\,dt\notag\\
    &\leq \int_{H^2_j}(\frac{3}{4}|\partial_t u|^2\phi_1^2+\frac{Ck}{t_0}|\na u|^2)\,d\meas\,dt\notag,
\end{align}
where the last inequality follows from integration by parts, Young's inequality and \eqref{cutoffestimates}.
This gives that 
\begin{align}
    \int_{H_j^1}|\partial_tu|^2\,d\meas\,dt\leq \frac{Ck}{t_0}\int_{H_j^2}|\na u|^2\,d\meas\,dt\label{equ1}.
\end{align}
Similarly, we may choose a cutoff function $\phi_2(x,t)$ supported in $H^1_{j+1}$ such that $\phi_2=1$ on $H^2_j$, satisfying
\begin{align}
    |\na \phi_2(x,t)|^2+|\partial_t \phi_2(x,t)|\leq \frac{Ck}{t_0}.
\end{align}
Arguing as before, we have
\begin{align}
    \int_{H_{j+1}^1}|\na u|^2\phi_2^2\,d\meas\,dt&=-\int_{H_{j+1}^1}u\,\partial_t u\,\phi_2^2 +u\la\na u,\na \phi_2^2\ra\,d\meas\,dt\notag\\&=-\int_{H_{j+1}^1}\frac{1}{2}\,\partial_t u^2\,\phi_2^2 +u\la\na u,\na \phi_2^2\ra\,d\meas\,dt\notag\\
    &\leq \int_{H_{j+1}^1}(\frac{1}{4}|\na u|^2\phi^2_2+\frac{Ck}{t_0}u^2)\,d\meas\,dt,
\end{align}
and so
\begin{align}
    \int_{H_j^2}|\na u|^2\,dx\,dt\leq \frac{Ck}{t_0}\int_{H_{j+1}^1}u^2\,dx\,dt\label{equ2}.
\end{align}
By using \eqref{equ1},\eqref{equ2} inductively on $j$, we obtain
\begin{align}\label{kthtimederivativeestimate}
    \int_{H_1^1}(\partial_t^k u)^2\,dx\,dt\leq \frac{(Ck)^{2k}}{t_0^{2k}}\int_{H^1_{k+1}}u^2\,dx\,dt\leq \frac{(Ck)^{2k}}{t_0^{2k-1}},
\end{align}
where $C$ at the end also depends on the $L^2$-norm of $u_0$.

From \cite{HK,BB,R} we know that the $(p,p)$-Poincar\'e inequality holds on $\RCD(K,N)$ space for any $p \geq 1$ (see, for example, \cite[Section 4]{K15} for a careful discussion of Poincar\'e inequalties in the $\RCD(K,N)$ setting). Thus Moser iteration (\cite[Lemma 3.10]{HK}) shows that for $R<1$ there exist a mean value inequality \begin{align}
    |\partial_t^ku(x_0,t_0)|^2\leq C(\frac{R^{\nu_2}}{|B(x_0,R)|})\frac{1}{R^{\nu_2(1+\frac{2}{\nu_2})}}\int_{B(x,R)\times(t-R^2,t)}|\partial_t^ku|^2\,d\meas \,dt.
\end{align}
Thus by taking $R=\frac{\sqrt{t_0}}{\sqrt{2k}}$ and using \eqref{kthtimederivativeestimate}, we obtain
\begin{align}
    |\partial_t^ku(x_0,t_0)|^2\leq \frac{C}{|B(x_0,\frac{\sqrt{t_0}}{\sqrt{2k}})|(\frac{\sqrt{t_0}}{\sqrt{2k}})^2}\int_{H_1^1}|\partial_t^ku|^2\,d\meas\,dt\leq \frac{C}{|B(x_0,\frac{\sqrt{t_0}}{\sqrt{2k}})|}\frac{(Ck)^{2k+1}}{t_0^{2k}},\,\,t_0<k.
\end{align}
Using volume comparison, this implies that $u(x_0,\cdot)$ is analytic with respect to $t$. In particular, since $u$ vanishes on $V \times (t_1,t_2)$ by assumption, $u$ also vanishes on $V\times [0,T]$.

We now prove the second assertion of the theorem by contradiction. Assume $X$ is compact and $u\not\equiv 0$. 

Let $\phi_k$ be eigenfunctions of $-\Delta$ corresponding to eigenvalues $\lambda_k$ with $\|\phi_k\|_{L^2}=1$ and $0 = \lambda_0 < \lambda_1 \leq \lambda_2 \leq ... \to \infty$, see \cite{H18} for a discussion on this. It follows from the estimates obtained in the appendix of \cite{AHTP18} that $u$ admits the representation 
\begin{align}
    u(x,t)=\sum_{k=0}^\infty a_k e^{-\lambda_kt}\phi_k(x).
\end{align}


Let $j$ be the first index for which $a_j \neq 0$. As it does not affect the argument, we assume for simplicity that the dimension of the eigenspace corresponding to $\lambda_j$ is $1$. This gives that for any $x_0\in V$,
\begin{align}
    0=\mathop{lim}\limits_{t\to\infty}e^{\lambda_j t}u(x_0,t)=a_j \phi_j(x_0),
\end{align}
 where we have used \cite[Proposition 7.1]{AHTP18} to bound the values of $\phi_k(x_0)$ for $k > j$ to obtain the second equality. As we assumed that $a_j\neq 0$, we conclude that $\phi_j(x_0)=0$. This shows that $\phi_j(x_0)=0$ for all $k$ and $x_0\in V$. From \cite{DZ}, this implies that $\phi_j\equiv 0$, which is a contradiction. It follows that $u \equiv 0$. Note that similar arguments also work for Dirichlet problem on non-compact space.
 \end{proof}
 \begin{Rem}
     The previous argument actually shows the time analyticity of caloric functions with respect to time on any $\RCD(K,N)$ space, since that part of the argument does not require any assumptions on dimension. 
 \end{Rem}

 \begin{Rem}
 In \cite{L}, Fourier transform was used to reduce the problem to a solution of an elliptic equation on $X \times \R$. In this case, one does not have weak unique continuation for elliptic equations on $\RCD(K,3)$ spaces, so a different argument had to be used. 
 \end{Rem}

Finally, we recall a well-known counterexample given by Miller \cite{M} which indicates that in general we cannot expect weak unique continuation for parabolic operators with time-dependent coefficients even if the time-slices of the corresponding metric have a uniform Ricci curvature bound.

 \begin{Pro}(\cite{M})
 There exists a smooth function $u:\R^2\times [0,\infty)\to\R$  such that:\begin{align}
     u_t=((1+A(t)+a)u_x)_x+(bu_y)_x+(bu_x)_y+((1+C(t)+c)u_y)_y,
 \end{align}
 where $A=C=a=b=c=u=0$ on $t\geq T$, $a,b,c,u$ are smooth on $\R^2\times [0,\infty)$, $A(t),C(t)$ are smooth on $(0,T)$ and H\"older on $[0,\infty)$. Moreover, $u,a,b,c$ are period in x and y with period $2\pi$.
 \end{Pro}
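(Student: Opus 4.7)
The plan is a reverse-engineering construction: choose $u$ first so that it has the desired vanishing and periodicity, and then solve for the coefficients $A, C, a, b, c$ so that the equation is satisfied. Since a parabolic operator with coefficients Lipschitz in time satisfies unique continuation by classical Aronszajn-type results, the construction is forced to produce $A(t), C(t)$ that are only H\"older at the critical time $t = T$; conversely it is precisely at this borderline regularity that counterexamples become possible.

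I would start with an ansatz of the form
\begin{align*}
u(x,y,t) = \alpha(t)\cos x \cos y + \beta(t)\sin x \sin y,
\end{align*}
where $\alpha, \beta \in C^\infty([0,\infty))$ vanish identically on $[T,\infty)$, are not simultaneously zero on $(T-\delta, T)$ for any $\delta > 0$, and \emph{oscillate} (rather than decay monotonically) as $t \to T^-$. Monotone decay to zero is too restrictive: it would force the logarithmic derivatives $\alpha'/\alpha$, $\beta'/\beta$ to behave in a controlled way that is ultimately compatible with a Lipschitz-in-$t$ coefficient, ruling out the sought counterexample. The oscillation must be arranged so that the ratios appearing in the compatibility conditions below have merely H\"older regularity at $t = T$.

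Substituting the ansatz into the equation and using that derivatives of $\cos x\cos y$ and $\sin x\sin y$ stay inside the span of the four basic trigonometric products, I would equate the coefficients of these four independent monomials. Ignoring the lower-order perturbations $a, b, c$ for the moment, the principal terms reduce to two ODEs for $(\alpha,\beta)$ in terms of $1+A(t)$ and $1+C(t)$. The strategy is to prescribe the oscillating profile of $(\alpha,\beta)$ precisely so that these ODEs can be inverted to recover $A, C$ with the stated H\"older (but not Lipschitz) regularity up to $t=T$, and then to choose the $2\pi$-periodic corrections $a, b, c$ smoothly on $\R^2\times[0,\infty)$ to absorb the remaining cross terms, while keeping them identically zero on $\{t \geq T\}$.

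The main obstacle, and the technical heart of Miller's construction, is tuning the oscillation of $(\alpha,\beta)$ finely enough that: (i) the induced $A, C$ are globally H\"older on $[0,\infty)$ and smooth on $(0,T)$; (ii) the spatial correctors $a, b, c$ remain smooth, $2\pi$-periodic, and compactly supported in $[0,T)$ in time; and (iii) $u$ itself is smooth across $t=T$. Once these compatibility and regularity conditions are arranged, periodicity and the failure of weak unique continuation (since $u\equiv 0$ on $\R^2\times[T,\infty)$ yet $u\not\equiv 0$) are immediate.
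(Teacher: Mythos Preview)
The paper does not supply a proof of this proposition; it is quoted verbatim as Miller's result and no argument is reproduced. There is therefore nothing in the paper to compare your attempt against.

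That said, a brief comment on your sketch. The philosophy is right---one fixes $u$ first and reverse-engineers the coefficients, and oscillation (not monotone decay) near $t=T$ is indeed the mechanism that forces $A,C$ down to H\"older regularity. But your specific ansatz $u=\alpha(t)\cos x\cos y+\beta(t)\sin x\sin y$ runs into an algebraic obstruction: for this $u$ one has $u_{xx}=u_{yy}=-u$, so the principal part $(1+A)u_{xx}+(1+C)u_{yy}$ depends only on $A+C$ and the two ``ODEs for $(\alpha,\beta)$'' collapse to a single equation $\alpha'/\alpha=\beta'/\beta=-(2+A+C)$. You then cannot recover $A$ and $C$ separately from the leading terms, and once you bring in the spatially varying $a,b,c$ to compensate, products like $a_x u_x$ leave the four-dimensional trigonometric span you were hoping to stay in. Miller's actual construction avoids this by working on a sequence of shrinking time intervals accumulating at $T$; on each block the solution carries a single (increasing) spatial frequency in one variable, and the coefficients are built block by block with carefully matched transition layers. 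The H\"older-but-not-Lipschitz behaviour of $A,C$ at $T$ then emerges from the growth rates of the frequencies versus the decay of the block lengths.
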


 \section{Elliptic Frequency estimate on Metric horn}
In this section we will give a frequency estimate on the metric horn, which allows us to prove a form of unique continuation. Recall from \cite{DZ} that the standard formulation of strong unique continuation does not hold at the horn tip. The form of unique continuation we will prove in this section will therefore assume a higher order of decay at the horn tip, see Remark \ref{SUC on metric horn} at the end of the section for more details. 

 
On a weighted warped product $(X,dr^2+f^2(r)g_{S^{n-1}},e^{-\psi(r)}dvol)$, given function $\varphi$, we have that
\begin{align}
     \Delta \varphi=\partial_r^2\varphi+(n-1)\partial_r\varphi(log\,f)_r+ f^{-2}\Delta_{S^{n-1}}\varphi-\partial_r\varphi \partial_r\psi,
 \end{align}
 away from $r=0$.
 In the case of the standard metric horn, 
 \begin{align}
     f(r)&=\frac{1}{2}r^{1+\ep},\\ \psi(r)&=-(N-n)(1-\eta)log(r),
 \end{align}
 and so,
 \begin{align}
     \Delta \varphi=\partial_r^2\varphi+\partial_r\varphi(\frac{(n-1)(1+\ep)+(N-n)(1-\eta)}{r})+\frac{4}{r^{2+2\ep}}\Delta_{S^{n-1}}\varphi.
 \end{align}
 From the equation of Laplacian on metric horn, in particular, for $\varphi = r^\alpha$, 
 we have that \begin{align}\label{distancelaplacianestimate}
     \Delta \varphi =r^{\alpha-2}\alpha(\alpha+N-2+(n-1)\ep-(N-n)\eta) 
 \end{align}
  and\begin{align}
     \Hess\,\varphi=\alpha(\alpha-1)r^{\alpha-2}dr\otimes dr+(\frac{r^{1+\ep}}{2})^2\alpha(1+\ep)r^{\alpha-2}g_{S^{n-1}}.
 \end{align} 
 If we take $\alpha=2$, then
 \begin{align}
     \Hess\,\varphi= 2(dr^2+(1+\ep)(\frac{r^{1+\ep}}{2})^2g_{S^{n-1}}).
 \end{align}
 
Given an eigenfunction u with $\Delta u=\lambda u$, define scale-invariant quantities $I(r),E(r)$ and the frequency function $U(r)$ with respect to the level sets of the function $d_p$, where $p$ is the horn tip. For $r>0$, we denote
\begin{align}B_r=\{x\,|\,d_p(x)<r\},\\
\p B_r=\{x\,|\,d_p(x)=r\},\end{align} and define
\begin{align}
&I(r)=r^{1-n}\int_{\p B_r}u^2 d\meas_r\label{I},\\
& E(r)=r^{2-n}\int_{B_r}|\na u|^2+\lambda u^2 d\meas=r^{2-n}\int_{\p B_r}\frac{u}{|\na d_p|} \,\la\na u,\na d_p\ra d\meas_r, 
\label{E}\\
& U(r)=\frac{E(r)}{I(r)}\label{U},
\end{align}
where $\meas = e^\psi(r)dvol$ is the weighted volume measure of the metric horn and $\meas_r = e^\psi(r)dvol_r$ is the corresponding weighted area measure on $\p B_r$. 

We first compute the derivative of $I$. We remark that all  computations are done away from the cone tip $p$, so no regularity issues will arise from $p$ itself. Let $\phi$ be any smooth function compactly supported on $(0,\infty)$, we have
\begin{align*}
   & -\int I(t)\phi'(t)dt=-\int t^{1-n}\int_{\p B_t}u^2\phi'(t)dt=-\int u^2 d^{1-n}_p\phi'(d_p)\\&=-\int u^2 d^{1-n}_p\la\na d_p,\na\phi(d_p)\ra=\int \dv(u^2 d^{1-n}_p\na d_p)\phi(d_p)\\
   &=\int \frac{1}{2-n}(2u\la\na u, \na d_p^{2-n}\ra+u^2 \Delta d_p^{2-n})\phi(d_p)\\&=\int_t\int_{\p B_t} \frac{1}{(2-n)|\na d_p|}(2u\la\na u, \na d^{2-n}_p\ra+u^2 \Delta d^{2-n}_p)\phi(d_p)dt.
\end{align*}
This shows that 
\begin{align}\label{I'first}   I'(r)&=\int_{\p B_r} \frac{1}{(2-n)|\na d_p|}(2u\la\na u, \na d^{2-n}_p\ra+u^2 \Delta d^{2-n}_p)\\&=\frac{2E(r)}{r}
+\int_{\p B_r} \frac{1}{(2-n)|\na d_p|}u^2 \Delta d^{2-n}_p,\quad a.e.\,\, t\in(0,1),\notag\end{align}
and so by using \eqref{distancelaplacianestimate},
\begin{align}
    (\log I)'(r)-\frac{2U(r)}{r}=\frac{N-n+(n-1)\ep-(N-n)\eta}{r}.\label{logI}
\end{align}

For $E$, from the definition \eqr{E} and the coarea formula, we have that 
\begin{align}
    E'(r)=(2-n)\frac{E(r)}{r}+r^{2-n}\int_{\p B_r}({|\na u|^2}+\lambda u^2).\label{E'1}
\end{align}
Now consider the vector field $X=\frac{\la \na(d_p^2),\na u\ra\na u-\frac{1}{2}|\na u|^2\na(d_p^2)}{|\na d_p|^3}$, we have that 
\begin{align}
\dv(X)={\Hess(d_p^2)(\na u,\,\na u)-\frac{1}{2}|\na u|^2\Delta d_p^2}+\lambda u \la\na(d_p^2),\,\na u\ra
\label{div(X)}
\end{align}
By Integrating both sides in $B_r$ we have that 
\begin{align}  \label{shrink}
  \int_{\p B_r}\frac{2r}{|\na d_p|^3}|\la \na d_p,\,\na u\ra|^2-r\frac{|\na u|^2}{|\na d_p|}&=\int_{\p B_r}\la X,\,\na d_p\ra\\&=\int_{B_r}{\Hess(d_p^2)(\na u,\,\na u)-\frac{1}{2}|\na u|^2\Delta d_p^2}+\lambda u \la\na(d_p^2),\,\na u\ra.
  \notag
\end{align}
Combining \eqr{E'1} and \eqr{shrink} we have that 
\begin{align}
   E'(r)=&(2-n)\frac{E(r)}{r}+r^{2-n}\int_{\p B_r}\frac{2}{|\na d_p|^3}|\la \na d_p,\,\na u\ra|^2+\lambda u^2\\&-r^{1-n}\int_{B_r}({\Hess(d_p^2)(\na u,\,\na u)-\frac{1}{2}|\na u|^2\Delta d_p^2}+\lambda u \la\na(d_p^2),\,\na u\ra). \notag 
\end{align}

Now consider the frequency function $U(r)$. We have that 
\begin{align}\label{logU'}
    \frac{d}{dr}log\,U(r)=&\frac{E'(r)}{E(r)}-\frac{I'(r)}{I(r)}\\=&\frac{1}{E(r)}(-\frac{2E^2(r)}{rI(r)}+r^{2-n}\int_{\p B_r}|2\la \na d_p,\,\na u\ra|^2+\lambda u^2)-\frac{1}{I(r)}\int_{\p B_r} \frac{1}{2-n}u^2 \Delta d^{2-n}_p+\frac{2-n}{r}\notag\\&-\frac{1}{E(r)}(r^{1-n}\int_{B_r}\Hess(d_p^2)(\na u,\,\na u)-\frac{1}{2}|\na u|^2\Delta d_p^2+\lambda u \la\na(d_p^2),\,\na u\ra).\notag
\end{align}
 From Cauchy-Schwarz inequality we know that \begin{align}
     -\frac{2E^2(r)}{rI(r)}+2r^{2-n}\int_{\p B_r}|\la \na d_p,\,\na u\ra|^2\geq 0.\label{cs}
 \end{align}
 Combining \eqr{logU'} and \eqr{cs} we have that
 \begin{align}
    \frac{d}{dr}log\,U(r)\geq& \frac{\lambda r}{U(r)}-\frac{1}{I(r)}\int_{\p B_r} \frac{1}{2-n}u^2 \Delta d^{2-n}_p+\frac{2-n}{r}\label{U'1}\\&-\frac{1}{E(r)}(r^{1-n}\int_{B_r}\Hess(d_p^2)(\na u,\,\na u)-\frac{1}{2}|\na u|^2\Delta d_p^2-\frac{\lambda}{2} u^2 \Delta d_p^2)\notag.
\end{align}
 As we have that 
\begin{align}
    -\frac{1}{2-n}\int_{\p B_r} u^2 \Delta d^{2-n}_p=-\frac{1}{2}\int_{\p B_r} u^2 \dv( d^{-n}_p \na d_p^2)=-\frac{r^{-n}}{2}\int_{\p B_r} u^2 \Delta d_p^2+nr^{-n}\int_{\p B_r} u^2,
\end{align}
it follows that 
 \begin{align}\label{U'2}
 \begin{split}
    \frac{d}{dr}log\,U(r)&\geq \frac{\lambda r}{U(r)}+\frac{2}{r}-\frac{1}{E(r)}(r^{1-n}\int_{B_r}\Hess(d_p^2)(\na u,\,\na u))\\ &\geq \frac{\lambda r}{U(r)}-\frac{r^{1-n}\int_{B_r}2\ep r^{2+2\ep} g_{S^{n-1}}(\na u,\na u)}{r^{2-n}\int_{B_r}|\na u|^2}\\&\geq\frac{\lambda r}{U(r)}- \frac{2\ep}{r}.
\end{split}
\end{align}
 This shows that 
 \begin{align}
     (r^{2\ep} U(r))'\geq \lambda r^{1+2\ep},
 \end{align}
and so \begin{align}
      r^{2\ep}U(r)-U(1)\leq \frac{\lambda}{2+2\ep}(r^{2+2\ep}-1),
 \end{align}
 which is equivalent to 
 \begin{align}
     \label{Usecond}U(r)\leq  r^{-2\ep}U(1)+\frac{\lambda}{2+2\ep}(r^{2}-r^{-2\ep})\leq Cr^{-2\ep}.
 \end{align}
 Combining with \eqref{logI} we obtain
 \begin{align}
    (\log I)'(r)\leq Cr^{-1-2\ep}.
 \end{align}
 This gives \begin{align}
    \log I(r)\geq \log I(1)+C (1-r^{-2\ep}),
 \end{align}
and so in all \begin{align}
     I(r)\geq Ce^{-Cr^{-2\ep}}.
 \end{align}
 
 \begin{Rem}\label{SUC on metric horn}
 From \cite{DZ}, we know that strong unique continuation fails on metric horn. In particular, any harmonic function with $u(0)=0$ satisfies $u(x)=O(e^{-C(\log r)^2})$. However the discussion above shows that if an eigenfunction, in particular a harmonic function, vanishes at the tip up to order $u(x)=O(e^{-Cr^{-2\ep}})$, then $u\equiv 0$.
 \end{Rem} 
 
 \section{Failure of strong unique continuation of eigenfuctions on metric horn}
 In this section we will prove the following theorem:
\begin{Thm}
    There exists an $\RCD(K,4)$ space and a non-trivial eigenfunction on it with eigenvalue $\mu\neq 0$, which vanishes up to infinite order at one point.
\end{Thm}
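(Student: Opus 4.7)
The plan is to use the $\RCD(K,4)$ space $X$ from \cite{DZ} --- a compact space containing a metric horn tip $p$ --- and to show that $X$ admits eigenfunctions of $-\Delta$ with eigenvalue $\mu \neq 0$ that vanish to infinite order at $p$. The key observation is that in the equation $\Delta u = \mu u$ the term $\mu u$ is a regular, lower-order perturbation near $p$ compared to the singular angular term, so the infinite-order tip decay that drove the harmonic example in \cite{DZ} persists unchanged.

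Near $p$, I separate variables on the warped product and seek a candidate eigenfunction of the form $u(r,\theta) = \phi(r)\psi_k(\theta)$, where $\Delta_{S^{n-1}}\psi_k = -\lambda_k\psi_k$. The warped-product Laplacian computed in the preceding section yields the radial ODE
\[
\phi''(r) + \frac{\alpha}{r}\phi'(r) - \Bigl(\frac{4\lambda_k}{r^{2+2\ep}} + \mu\Bigr)\phi(r) = 0,\qquad \alpha = (n-1)(1+\ep) + (N-n)(1-\eta).
\]
Since $\ep > 0$, the singular potential $4\lambda_k/r^{2+2\ep}$ dominates the constant $\mu$ as $r \to 0^+$, and a Liouville--Green / WKB analysis produces, for each $\lambda_k > 0$, one fundamental solution decaying like $\exp(-Cr^{-\ep})$ and one growing correspondingly --- exactly the same behavior as in the harmonic case of \cite{DZ}, with $\mu$ entering only through a regular correction at each finite $r$.

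To globalize, assume (capping off the horn as in \cite{DZ} if necessary) that $X$ is compact, so that $-\Delta$ has discrete nonnegative spectrum $0 = \mu_0 < \mu_1 \le \mu_2 \le \cdots \to \infty$. Any eigenfunction $u_j$ with $\mu_j > 0$ admits near $p$ a spherical harmonic expansion $u_j = \sum_k \phi_{j,k}(r)\psi_k(\theta)$, and for each $k$ with $\lambda_k > 0$ the radial factor $\phi_{j,k}$ must be a multiple of the decaying fundamental solution, since the growing alternative fails to lie in $W^{1,2}$ near $p$. Hence $u_j$ vanishes to infinite order at $p$ as soon as at least one such $\phi_{j,k}$ with $\lambda_k>0$ is nonzero.

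The main obstacle is the last nondegeneracy --- ruling out that every $u_j$ with $\mu_j > 0$ is purely radial near $p$. I would handle this by a simple dimension count: the spectral decomposition of $L^2(X)$ is infinite-dimensional, while the subspace of functions that are purely radial in a fixed horn neighborhood of $p$ contributes at most one dimension to each eigenspace, so for infinitely many indices $j$ the eigenfunction $u_j$ must carry a nontrivial higher spherical harmonic mode at $p$ and therefore vanish there to infinite order. If necessary, $X$ can be perturbed slightly on the cap side (without destroying the $\RCD(K,4)$ property or the horn tip) to explicitly break any accidental rotational symmetry that might otherwise force pure radiality, yielding the required eigenfunction with $\mu \neq 0$.
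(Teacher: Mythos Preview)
Your identification of the radial ODE and the WKB-type dichotomy for the higher spherical modes is correct and matches the paper's analysis. However, there is a genuine gap in the step
\[
\text{``}u_j\text{ vanishes to infinite order at }p\text{ as soon as at least one }\phi_{j,k}\text{ with }\lambda_k>0\text{ is nonzero.''}
\]
This is false. The radial mode $\phi_{j,0}$ solves $\phi''+\tfrac{\alpha}{r}\phi'+\mu_j\phi=0$, whose $L^2$ solution near the tip is $r^{(1-\alpha)/2}J_{(\alpha-1)/2}(r\sqrt{\mu_j})$, which tends to a \emph{nonzero constant} as $r\to 0$. Hence if $\phi_{j,0}\not\equiv 0$ then $\int_{B_r(p)}|u_j|^2\,d\meas$ decays only like $r^{\alpha+1}$, regardless of how many higher modes are present. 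The correct condition for infinite-order vanishing is that the radial component $\phi_{j,0}$ be \emph{identically zero}, not merely that some higher mode be nonzero.

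This means the ``main obstacle'' you isolate is the wrong one, and your remedies go in the wrong direction. Your dimension count shows only that some eigenfunction is not purely radial; a generic such eigenfunction still has nonzero $\phi_{j,0}$ and does not vanish to infinite order. Worse, perturbing the cap to \emph{break} rotational symmetry is exactly backwards: without symmetry a typical eigenfunction mixes all modes, including the zeroth, and you lose any mechanism for producing an eigenfunction with $\phi_{j,0}\equiv 0$. The fix---and what the paper uses implicitly---is to \emph{exploit} the symmetry: the modified metric horn from \cite{DZ} is globally a warped product over $S^{n-1}$, so the Laplacian commutes with the $SO(n)$ action and one can choose eigenfunctions of the pure form $g(r)\varphi_i(\theta)$ for fixed $i\ge 1$. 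These automatically have zero radial part, necessarily have $\mu\neq 0$, and your ODE analysis then gives the infinite-order vanishing at $p$.
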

 \begin{proof}Consider any modified metric horn $X$ constructed in \cite[Section 6]{DZ}, which uses techniques in \cite{WZ}. Let us denote the eigenfunctions on $S^{n-1}$ as $\{\varphi_i\}_{i=1}^\infty$ such that \begin{align}
     &\Delta_{S^{n-1}}\varphi_i=-\mu_i\varphi_i,\,\,\mu_i>0\\
     &\int_{S^{n-1}}\varphi_i^2dS=1.
 \end{align} 
 Assume $\varphi$ is an $L^2$ function which is smooth away from the tip, then $\varphi$ may be decomposed as 
 \begin{equation}\label{decomp}
    \varphi(r,\theta)=f_0(r)+\sum_{i=1}^\infty f_i(r)\varphi_i(\theta).
\end{equation}
Therefore, for any eigenfunction on metric horn with eigenvalue $-\mu$ we have, for the decomposition \eqref{decomp} and each $i$,
 \begin{align}
     f_i''(r)+f_i'(r)(\frac{(n-1)(1+\ep)+(N-n)(1-\eta)}{r})+\frac{4}{r^{2+2\ep}}f_i(r)(-\mu_i)+\mu f_i(r)=0,
 \end{align}
 for $r$ sufficiently close to $0$. We note that we do not have this formula for arbitrary $r$ since the modified metric horn was constructed using a gluing procedure. 
 
For the radial part, that is for $\mu_0=0$,
 \begin{align}
     f_0''(r)+f_0'(r)(\frac{(n-1)(1+\ep)+(N-n)(1-\eta)}{r})+\mu f_0(r)=0,
 \end{align}
 Denoting $c=(n-1)(1+\ep)+(N-n)(1-\eta)$, we have
 \begin{align}
     f_0(r) = k_1 r^{\frac{1 - c}{2}} J_{\frac{c - 1}{2}}(r \sqrt{\mu}) + k_2 r^{\frac{1 - c}{2}} Y_{\frac{c - 1}{2}}(r \sqrt{\mu}),
 \end{align}
 where $J_\nu,\,Y_\nu$ are Bessel functions. For a discussion of the properties of Bessel functions, see \cite[Appendix B]{M10}. We will use the fact that $J_{\frac{c-1}{2}}(r) \approx r^{\frac{c-1}{2}}$ and $Y_{\frac{c-1}{2}}(r) \approx r^{\frac{1-c}{2}}$ as $r \to 0$. Since $\varphi$ is assumed to be in $L^2$, we have that $f_0$ is in a weighted $L^2$ space where the weight for small $r$ is of the form $r^{c}$. Taking this into consideration with the asymptotics of $J$ and $Y$, we conclude that 
\begin{align}
    f_0(r)=Cr^{\frac{1 - c}{2}} J_{\frac{c - 1}{2}}(r \sqrt{\mu})\approx C\mu^{\frac{c-1}{4}}.
\end{align}

 For $\mu_i=i(n+i-2)$ and denoting $c=(n-1)(1+\ep)+(N-n)(1-\eta)$, we obtain
 \begin{align}
     f_i''(r)+f_i'(r)(\frac{c}{r})+\frac{4}{r^{2+2\ep}}f_i(r)(-\mu_i)+\mu f_i(r)=0.
 \end{align}
Define \begin{align}
     f_i(r)=g_i(r^{-\ep}).
 \end{align}
Then \begin{align*}
     f_i'(r)&=-\ep r^{-\ep-1}g_i'(r^{-\ep})，\\
      f_i''(r)&=\ep(\ep+1) r^{-\ep-2}g_i'(r^{-\ep})+\ep^2 r^{-2\ep-2}g_i''(r^{-\ep}).
 \end{align*}Thus \begin{align}
        g_i''(r)+\frac{\ep+1-c}{\ep} r^{-1}g_i'(r)+\frac{-4\mu_i}{\ep^2}g_i(r)+\frac{\mu}{\ep^2} r^{-\frac{2}{\ep}-2} g_i(r)=0.
 \end{align}
If we further define \begin{align}
    g_i(r)=k_i(r)r^{\frac{c-1-\ep}{2\ep}},
\end{align}
then\begin{align*}
    g_i'(r)&=k_i'(r)r^{\frac{c-1-\ep}{2\ep}}+k_i(r)\frac{c-1-\ep}{2\ep}r^{\frac{c-1-\ep}{2\ep}-1},\\
    g_i''(r)&=k_i''(r)r^{\frac{c-1-\ep}{2\ep}}+2k_i'(r)\frac{c-1-\ep}{2\ep}r^{\frac{c-1-\ep}{2\ep}-1}+k_i(r)\frac{c-1-\ep}{2\ep}(\frac{c-1-\ep}{2\ep}-1)r^{\frac{c-1-\ep}{2\ep}-2}
\end{align*} and so \begin{align}
     k_i''(r)=(\frac{c-1-\ep}{2\ep}(\frac{c-1-\ep}{2\ep}+1)r^{-2}+\frac{4\mu_i}{\ep^2}-\frac{\mu}{\ep^2} r^{-\frac{2}{\ep}-2})k_i(r).\label{eqnk}
 \end{align}
If we consider the solution with data $k_i^{(1)}(r_\mu)=1=(k_i^{(1)})'(r_\mu)$, where $r_\mu$ is chosen such that $1\geq\frac{c-1-\ep}{2\ep}(\frac{c-1-\ep}{2\ep}+1)r^{-2}-\frac{\mu}{\ep^2} r^{-\frac{2}{\ep}-2}\geq 0$ for $r\geq r_\mu$, that is $$r_\mu=\max\{\sqrt{\frac{c-1-\ep}{2\ep}(\frac{c-1-\ep}{2\ep}+1)},(\frac{c-1-\ep}{2\mu}(\frac{c-1-\ep}{2}+\ep))^{\frac{-\ep}{2}}\}.$$So for $r>r_\mu$ from $(\frac{4\mu_i}{\ep^2}+1)k^{(1)}_i(r)\geq(k^{(1)}_i)''(r)\geq \frac{4\mu_i}{\ep^2}k^{(1)}_i(r)$, we have \begin{align}e^{(\sqrt{\frac{4\mu_i}{\ep^2}}+1)(r-r_\mu)}\geq k_i^{(1)}(r)\geq \frac{1}{\sqrt{\frac{4\mu_i}{\ep^2}}}e^{\sqrt{\frac{4\mu_i}{\ep^2}}(r-r_\mu)}.\end{align} Then we have that \begin{align}
     k_i^{(2)}(r):=k_i^{(1)}(r)\int_{r}^\infty\frac{1}{(k_i^{(1)}(s))^2}ds.
 \end{align}
 is also a solution of \eqref{eqnk}. So we get\begin{align}
     \frac{1}{2({\frac{4\mu_i}{\ep^2}}+\sqrt{\frac{4\mu_i}{\ep^2}})}e^{(-\sqrt{\frac{4\mu_i}{\ep^2}}-2)(r-r_\mu)}\leq k_i^{(2)}(r)\leq \frac{\sqrt{\frac{4\mu_i}{\ep^2}}}{2}e^{(-\sqrt{\frac{4\mu_i}{\ep^2}}+1)(r-r_\mu)},\,\, \text{for}\, r>r_\mu .\label{eqna}
 \end{align}
 And also \begin{align}
     0<C(\ep,i)^{-1}\leq (k_2^{(2)})^2(r_\mu)+((k_2^{(2)})')^2(r_\mu)\leq C(\ep,i).
 \end{align}
From ODE theory we have that $k_i^{(1)},\,k_i^{(2)}$ form a basis of the solutions of \eqref{eqnk}. This tells us that\begin{align}
    f_i(r)=(ak_i^{(1)}+bk_i^{(2)})(r^{-\ep})r^{-\frac{c-1-\ep}{2}}.
\end{align} 
 As before, we have that $f_i$ is in a weighted $L^2$ space from assumption, this tells us that 
\begin{align}
    f_i(r)=bk_i^{(2)}(r^{-\ep})r^{-\frac{c-1-\ep}{2}}.
\end{align}  
From \eqref{eqna} we have the decay rate $f_i(r)=O(e^{-Cr^{-\ep}})$. This tells us that there exists eigenfunctions on $\RCD(K,4)$ spaces which are not zero but vanish up to infinite order at one point.\end{proof}\begin{Rem}
    In fact this argument tells us that any eigenfunction on this metric horn with zero integral on each link $\{r=r_0\}$ vanishes up to infinite order. 
\end{Rem} 

\begin{Rem}
Since \begin{align}
  \int_{r=0}^{r_\mu^{-\frac{1}{\ep}}}e^{2(-\sqrt{\frac{4\mu_i}{\ep^2}}-2)r^{-\ep}}r^{-(c-1-\ep)} 2^{1-n}r^c\,dr&=2^{1-n}\frac{(2(\sqrt{\frac{4\mu_i}{\ep^2}}+2))^{\frac{2(1+\ep)}{\ep}-1}}{\ep}\int_{2(\sqrt{\frac{4\mu_i}{\ep^2}}+2)r_\mu}^\infty e^{-r}r^{-\frac{2(1+\ep)}{\ep}}dr\notag\\&\geq c(n,\ep,\mu_i) e^{2(-\sqrt{\frac{4\mu_i}{\ep^2}}-2)r_\mu}r_\mu^{-\frac{2(1+\ep)}{\ep}},
\end{align}
the normalized coefficient $c_{\mu,i}$ for the eigenfunction $f_i(r)\varphi_i(\theta)$ with $L^2$-norm 1 corresponding to eigenvalue $\mu$  satisfies
\begin{align}
    c_{\mu,i}\leq c(n,\ep,\mu_i)e^{(\sqrt{\frac{4\mu_i}{\ep^2}}+2)r_\mu}r_\mu^{\frac{1+\ep}{\ep}}.
\end{align}
\end{Rem}

 \begin{Rem}
Let us use a more careful argument on $\varphi$. That is, as we know that \begin{align}
     |((k_i^{(2)})^2+((k_i^{(2)})')^2)|=|2(k_i^{(2)})'(k_i^{(2)}+(k_i^{(2)})'')|\leq 2|k_i^{(2)}(k_i^{(2)})'|C(\mu+1)\leq C(\mu+1)|(k_i^{(2)})^2+((k_i^{(2)})')^2|.
 \end{align}
 
 Thus if we denote the pasting radius as $\tilde{r}$, by the discussion we have \begin{align}
     C^{-1}e^{-C(\mu+1)(r_\mu-\tilde{r}^{-\frac{1}{\ep}})} \leq |((k_i^{(2)})^2+((k_i^{(2)})')^2)'|(\tilde{r}^{-\frac{1}{\ep}})\leq Ce^{C(\mu+1)(r_\mu-\tilde{r}^{-\frac{1}{\ep}})}.
 \end{align}
 
 As we know that when transform back \begin{align}
     C^{-1}e^{-C(\mu+1)(r_\mu-\tilde{r}^{-\frac{1}{\ep}})}\leq|f_i^2+(f_i')^2|(\tilde{r})\leq Ce^{C(\mu+1)(r_\mu-\tilde{r}^{-\frac{1}{\ep}})}.
 \end{align}
 
 As outside $\tilde{r}$ the space is a cone, the function is of the form $f(r) = k_1 r^{(1 - b)/2} J_{1/2 \sqrt{(b - 1)^2 + 4 a}}(\sqrt{c} r) + k_2 r^{(1 - b)/2} Y_{1/2 \sqrt{(b - 1)^2 + 4 a}}(\sqrt{c} r)$. From the asymptotics of Bessel functions we can see that if we consider the eigenfunction on $B_R$, the normalized coefficient satisfies 
 \begin{align}
    c_{\mu,i,R}\leq Ce^{C(\mu+1)(\mu^{\frac{\ep}{2}}-\tilde{r}^{-\frac{1}{\ep}})}R^{-\frac{n-1}{2}}.
\end{align}
 
 \end{Rem}
 
 \section{parabolic frequency estimate on metric horn}
 
 In this section we give a parabolic frequency estimate and the corresponding unique continuation type result on the metric horn. For simplicity, we consider the metric horn which is not modified. For previous discussions on parabolic frequency see for example \cite{Po,CM5}.
 
Consider the metric horn $(\R^n,\,\,g_{\ep,n},\,\,e^{(N-n)(1-\eta)log(r)}dvol)$ with $  g_{\ep,n}=dr^2+(\frac{1}{2}r^{1+\ep})^2g_{S^{n-1}}$. Note that we will use $X$ to denote the metric horn in the following. Let $p(x,y,t)$ be the heat kernel. Following \cite{Po}, for $x_0 \in X$ and $t_0 \geq 0$, we define the backward heat kernel $G_{x_0, t_0}: X \times (-\infty, t_0) \to \R$ as $G_{x_0,t_0}(x,t) := p(x,x_0,t_0-t)$. For simplicity of notation, we denote $G(x,t) = G_{o,0}(x,t)$. Let $u$ be a solution of \eqref{heat} on $X \times [-R_0^2, 0]$ for some $R_0 > 0$.  

 For $R_0 \geq R > 0$, define 
 \begin{align}
     I(R)&=R^2\int_{t=-R^2}|\na u|^2 G \, d\meas,\\
     D(R)&=\int_{t=-R^2}u^2G \, d\meas,\\
     N(R)&=\frac{I(R)}{D(R)}.
 \end{align}
 
From the definition of $G$, we have that\begin{align}
     \frac{\partial \log G}{\partial t}=-\Delta \log G-|\na \log G|^2,\end{align}
 and so
 \begin{align}
     \log G(x,t)=-\frac{(N+(n-1)\ep-(N-n)\eta)}{2}\log(-t)+\frac{|x|^2}{4t}.
 \end{align}
Therefore, 
\begin{align}
    I'(R)=&2R\int_{t=-R^2}|\na u|^2 Gd\meas-2R^3\int_{t=-R^2}(2\la\na u,\,\na u_t\ra G+|\na u|^2\partial_tG)d\meas\label{I'}\\=&2R\int_{t=-R^2}|\na u|^2 Gd\meas+4R^3\int_{t=-R^2}u_t^2 G+u_t\la\na u,\,\na G \ra d\meas \notag\\&-2R^3\int_{t=-R^2}|\na u|^2\partial_tGd\meas.\notag
\end{align}
Consider the vector field $X=\la\na G,\,\na u\ra\na u$, then
\begin{align}
     div(X)=\Hess_G(\na u,\,\na u)+\frac{1}{2}\la\na G,\,\na|\na u|^2\ra+\la\na G,\,\na u\ra\Delta u.
 \end{align}
 On the metric horn, 
 \begin{align}
     \frac{\Hess _G}{G}(x,t)=(\frac{|x|^2}{4t^2}+\frac{1}{2t})dr\otimes dr+\frac{(\frac{|x|^{1+\ep}}{2})^2}{2t}(1+\ep)g_{S^{n-1}},
 \end{align}
 and so
\begin{align}
    \int \Hess_G(\na u,\,\na u) +\la\na G,\,\na u\ra u_td\meas=\int\frac{1}{2}\Delta G|\na u|^2d\meas.
\end{align}
 So combining with \eqref{I'} we have that 
 \begin{align}
     I'(R)&=4R^3\int_{t=-R^2}(\frac{x}{2t}\cdot \na u+u_t)^2G(x,t)d\meas+4R^3\int_{t=-R^2}\frac{(\frac{|x|^{1+\ep}}{2})^2}{2t}\ep g_{S^{n-1}}(\na u,\,\na u)G(x,t)d\meas \notag\\&\geq 4R^3\int_{t=-R^2}(\frac{x}{2t}\cdot \na u+u_t)^2G(x,t)d\meas -\frac{2\ep}{R} I(R).
 \end{align}
Moreover,
 \begin{align}
     D'(R)=-2R\int_{t=-R^2}(2uu_tG+u^2\partial_t G)d\meas=-4R\int_{t=-R^2}u(u_t+\frac{x}{2t}\cdot \na u)Gd\meas,
 \end{align}
so from integration by parts we have that \begin{align}
     I(R)=-R^2\int_{t=-R^2}u(\partial_t u +\na u\cdot \frac{x}{2t}) G d\meas =\frac{R}{4}D'(R).
 \end{align}
From Cauchy-Schwarz inequality we obtain
 \begin{align}
     (\int_{t=-R^2}(\frac{x}{2t}\cdot \na u+u_t)^2G(x,t)d\meas)(\int_{t=-R^2}u^2G(x,t)d\meas)\geq (\int_{t=-R^2}u(\frac{x}{2t}\cdot \na u+u_t)G(x,t)d\meas)^2,
 \end{align}
 So we have that 
 \begin{align}
     (\log N)'(R)=\frac{I'}{I}(R)-\frac{D'}{D}(R)\geq -\frac{2\ep}{R},
 \end{align}
and so
\begin{align}
     \log N(R)\leq \log N(1)-2\ep\log R,
 \end{align}
 which is equivalent to that \begin{align}
     N(R)\leq CR^{-2\ep}.
 \end{align}
Since
 \begin{align}
     (\log D)'(R)=\frac{4I(R)}{RD(R)}=\frac{4}{R}N(R)\leq CR^{-1-2\ep},
 \end{align}
 we obtain \begin{align}
     \log D(R)\geq \log D(1)+C(1-R^{-2\ep}).
 \end{align}
 So finally
 \begin{align}
     D(R)\geq Ce^{-CR^{-2\ep}}.
 \end{align}
 This gives a strong unique continuation type result for solution of heat equation on the metric horn.
 
 \section{Failure of Strong unique continuation property of caloric on metric horn}
 In this section we will prove the following theorem:
 \begin{Thm}
    There exists an $\RCD(K,4)$ space and a non-stationary solution of heat equation on it which vanishes up to infinite order at one point.
\end{Thm}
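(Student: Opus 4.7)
The plan is to reduce the heat-equation case to the eigenfunction case established in the previous theorem via the classical separable ansatz $u(x,t) = e^{-\mu t}\varphi(x)$. The introduction already notes that harmonic functions vanishing to infinite order (which exist on some $\RCD(K,4)$ space by \cite{DZ}) only heat-flow to stationary solutions; what provides a genuinely time-dependent example here is that the preceding theorem furnishes an eigenfunction with eigenvalue $\mu\neq 0$, so the corresponding separated solution is automatically non-stationary.

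Concretely, I would let $X$ be the modified $\RCD(K,4)$ metric horn used in the previous theorem, and let $\varphi$ be the eigenfunction there with $\Delta \varphi = -\mu\varphi$, $\mu\neq 0$, which vanishes up to infinite order at the horn tip $x_0$. Setting $u(x,t):=e^{-\mu t}\varphi(x)$ on $X\times[0,T]$, the direct computation
\[
\Delta u - \partial_t u = e^{-\mu t}\Delta\varphi + \mu e^{-\mu t}\varphi = e^{-\mu t}(\Delta\varphi + \mu\varphi) = 0
\]
shows that $u$ is a weak solution of the heat equation in the sense of the definition in the introduction (one can test against any $\phi \in \Lip_0(X\times[0,T])$ and apply the elliptic weak-form for $\varphi$ time-slice by time-slice). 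Since $\mu\neq 0$, $u$ genuinely depends on $t$ and is thus non-stationary.

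To verify infinite-order vanishing at $(x_0,t_0)$ for any $t_0 \in (0,T]$, I would invoke the pointwise decay estimate $f_i(r)=O(e^{-Cr^{-\ep}})$ from the proof of the previous theorem, where the $f_i$ are the radial coefficients in the angular decomposition of $\varphi$ near the tip. Combined with the weighted volume form $r^{c}\,dr$ on the horn, this yields $\int_{B_r(x_0)} \varphi^2\,d\meas \leq C_N r^N$ for every $N$. Consequently, for any $t_0\in(0,T]$,
\[
\int_{B_r(x_0) \times ((t_0-r^2, t_0+r^2)\cap[0,T])}|u|^2\,d\meas\,dt \leq 2r^2 \sup_{|t-t_0|\leq r^2}e^{-2\mu t}\int_{B_r(x_0)} \varphi^2\,d\meas \leq C'_N r^{N+2},
\]
which is the desired parabolic vanishing to infinite order. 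I anticipate no serious obstacle in carrying out this plan: the entire construction rests on the elliptic result of the previous section, and the separable form automatically transports spatial infinite-order vanishing into parabolic infinite-order vanishing at every positive time, the only extra input being the harmless factor $r^2$ from the time slab and the boundedness of $e^{-\mu t}$ on $[0,T]$.
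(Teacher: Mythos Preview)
Your proposal is correct, but it takes a different and considerably more elementary route than the paper. You simply transport the eigenfunction $\varphi$ from the previous section via the separable ansatz $u(x,t)=e^{-\mu t}\varphi(x)$; since $\mu\neq 0$ this is non-stationary, and the spatial infinite-order vanishing of $\varphi$ at the tip immediately gives parabolic infinite-order vanishing of $u$. This is a complete proof of the theorem as stated.

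The paper instead argues more structurally. It fixes an angular mode $\varphi_i$ and expands the corresponding radial-time coefficient $f_i(r,t)$ in eigenfunctions of the full horn, $f_i(r,t)=\sum_j c_j e^{-\nu_j t}g_j(r)$. Using the eigenvalue asymptotics $C_1 j^{2/N}\le \nu_j\le C_2 j^2$ from \cite{ZZ} together with the pointwise estimates on $g_j$ established in Section~3, it splits the sum at a cutoff $k$ determined by $r$ and bounds both pieces to obtain $|f_i(r,t)|\le e^{-cr^{-\ep}}$. The payoff is the stronger conclusion recorded in the remark following the proof: \emph{every} caloric function on the modified metric horn with no radial component $f_0(r,t)$ vanishes to infinite order at the tip, not only the separable ones. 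Your argument, by contrast, produces one explicit example with essentially no additional work, which is all the theorem asks for; the paper's approach is what one would want if the goal were to describe the generic behaviour of heat flows near the tip.
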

 Similar as in the elliptic case, the heat equation becomes like
\begin{align}
     0=\Delta \varphi-\partial_t \varphi=\partial_r^2\varphi+\partial_r\varphi(\frac{(n-1)(1+\ep)+(N-n)(1-\eta)}{r})+\frac{4}{r^{2+2\ep}}\Delta_{S^{n-1}}\varphi-\partial_t \varphi.
 \end{align}
 Denote the eigenfunctions on $S^{n-1}$ as $\{\varphi_i\}_{i=1}^\infty$ such that \begin{align}
     &\Delta_{S^{n-1}}\varphi_i=-\mu_i\varphi_i,\,\,\mu_i>0\\
     &\int_{S^{n-1}}\varphi_i^2dS=1.
 \end{align}We can decompose it as $\varphi(r,\theta,t)=f_0(r,t)+\sum_{i=1}^\infty f_i(r,t)\varphi_i(\theta)$. So we have 
 \begin{align}
     \partial_r^2f_i(r,t)+\partial_rf_i(r,t)(\frac{(n-1)(1+\ep)+(N-n)(1-\eta)}{r})+\frac{4}{r^{2+2\ep}}f_i(r,t)(-\mu_i)-\partial_t f_i(r,t)=0
 \end{align}
 
 For $\mu_i=i(n+i-2)$ and denote $c=(n-1)(1+\ep)+(N-n)(1-\eta)$
 \begin{align}
       \partial_r^2f_i(r,t)+\partial_rf_i(r,t)(\frac{c}{r})+\frac{4}{r^{2+2\ep}}f_i(r,t)(-\mu_i)-\partial_t f_i(r,t)=0.
 \end{align}
 
 We first consider the solution of heat equation on the compact modified metric horn constructed in \cite[Section 6]{DZ}, where $p$ is the tip of the metric horn. We denote the eigenfunctions with eigenvalue $\nu_j$ with spherical components $\varphi_i$ as $q_{\nu_j}(r,\theta)=g_j(r)\varphi_i(\theta)$.
 
 Then we have that\begin{align}
     f_i(r,t)=\sum_j c_je^{-\nu_j t}g_j(r).
 \end{align} 
  Denote $$r_\mu=\max\{\sqrt{\frac{c-1-\ep}{2\ep}(\frac{c-1-\ep}{2\ep}+1)},(\frac{c-1-\ep}{2\mu}(\frac{c-1-\ep}{2}+\ep))^{\frac{-\ep}{2}}\}.$$
  
 By \cite{ZZ} we have $C_1j^{\frac{2}{N}}\leq\nu_j\leq C_2j^{2}$, thus $C_1j^{\frac{\ep}{N}}\leq r_{\nu_j}\leq C_2j^{\ep}.$
 
 From \eqref{eqna} we have that given any $100r_{\nu_{k}}\leq r^{-\ep} <100r_{\nu_{k+1}}$, which indicates that $\frac{C_1}{k+1}<r<C_2k^{-\frac{1}{N}}$, we have \begin{align}
     |f_i(r,t)|&\leq \sum_{j=1}^k|c_j|e^{-\nu_jt}c(n,\ep,\mu_i)e^{(\sqrt{\frac{4\mu_i}{\ep^2}}+2)r_{\nu_j}}r_{\nu_j}^{\frac{1+\ep}{\ep}}e^{(-\sqrt{\frac{4\mu_i}{\ep^2}}+1)r^{-\ep}}+\sum_{j=k+1}^\infty Ce^{-\nu_j t}\nu_j\notag\\
     &\leq Ck\cdot k^{1+\ep}e^{-cr^{-\ep}}+\sum_{j=k+1}^\infty Ce^{-Ctj^{\frac{2}{N}}}j^2\leq C r^{-(2+\ep)N}e^{-cr^{-\ep}}+Ce^{-Ctr^{-\frac{2}{N}}}r^{-2}\leq e^{-cr^{-\ep}}.
 \end{align}
 As the estimate is independent of k, we can see that $f_i$ vanishes up to infinite order at tip.
 
\begin{Rem}
    The argument above actually shows that all caloric functions on the modified metric horn which does not contain the radial part $f_0(r,t)$ vanishes up to infinite order at the tip.
\end{Rem}

\end{document}